\documentclass[11pt]{article}
\usepackage{a4}
\usepackage{amsmath,amssymb,amsthm, graphicx}
\usepackage{wasysym}
\usepackage[dvips]{epsfig}
\usepackage{amsfonts}

\newtheorem{theorem}{Theorem}[section]
\newtheorem{corollary}[theorem]{Corollary}
\newtheorem{lemma}[theorem]{Lemma}

\newcommand{\R}{\mathbb{R}}
\newcommand{\N}{\mathbb{N}}
\newcommand{\K}{{\sf K}}
\renewcommand{\L}{\mathcal{L}}
\newcommand{\F}{\mathcal{F}}
\newcommand{\M}{\mathcal{M}}

\newcommand{\heading}[1]{\medskip\par\noindent{\bf #1}}

\DeclareMathOperator{\conv}{conv}

\title{Non-representability of finite projective planes by convex sets}
\author{
Martin Tancer\thanks{
Department of Applied Mathematics and Institute for Theoretical Computer Science (supported by project 1M0545
of The Ministry of Education of the Czech Republic), Faculty of Mathematics
and Physics, Charles University, Malostransk\'e n\'am.~25, 118~00 Prague,
Czech Republic. Partially supported by project GAUK 49209.
E-mail: {\tt tancer@kam.mff.cuni.cz}
}}
\begin{document}

\maketitle
\begin{abstract}
We prove that there is no $d$ such that all finite projective planes
can be represented by convex sets in $\R^d$, answering a question of
Alon, Kalai, Matou\v{s}ek, and Meshulam.
Here, if $\mathbb P$ is a projective
plane with lines $\ell_1,\ldots,\ell_n$, a \emph{representation of $\mathbb P$
by convex sets} in $\R^d$ is a collection of convex sets $C_1,\ldots,C_n
\subseteq \R^d$ such that $C_{i_1},C_{i_2}\ldots,C_{i_k}$ have
a common point if and only if the corresponding lines
$\ell_{i_1},\ldots,\ell_{i_k}$ have a common point in $\mathbb P$. The proof combines a
positive-fraction
selection lemma of Pach with a result of Alon on ``expansion''
of finite projective planes. As a corollary, we show that
for every $d$ there are 2-collapsible simplicial complexes that are
not $d$-representable, strengthening a result of Matou\v{s}ek and the author.

%


\end{abstract}

\section{Introduction}
\heading{Intersection patterns of convex sets.}
One of the important areas in discrete geometry regards understanding to
``intersection patterns'' of convex sets. A~pioneering result in this area is 
Helly's theorem \cite{helly23} which
asserts that if $C_1,C_2,\ldots,C_n$
are convex sets in $\R^d$, $n\ge d+1$ and every $d+1$
of the $C_i$ have a common point, then
$\bigcap_{i=1}^n C_i\ne \emptyset$. 

Consequently, intersection patterns of convex sets have been studied intensively.
This study led to the introduction of $d$-representable and $d$-collapsible
simplicial complexes. We recall that the \emph{nerve} of a family
$\mathcal S = \{S_1, S_2, \dots, S_n\}$ is the
simplicial complex with vertex set $[n]:=\{1,2,\ldots,n\}$
and with a set $\sigma\subseteq [n]$ forming a simplex
if $\bigcap_{i\in \sigma} S_i\ne\emptyset$.
A simplicial complex $\K$ is
\emph{$d$-representable} if it is
isomorphic to the nerve of a family of convex sets in $\R^d$.
In this language Helly's theorem
implies that a $d$-representable complex is determined
by its $d$-skeleton.
We refer to \cite{danzer-grunbaum-klee63,eckhoff93,matousek02,matousek-tancer08} for more
examples and background.

Wegner in his seminal 1975 paper \cite{wegner75}
introduced $d$-collapsible simplicial complexes.
To define this notion, we first introduce an
\emph{elementary $d$-collapse}. Let
 $\K$ be a simplicial complex and let $\sigma,\tau\in\K$ be
faces (simplices) such that

(i) $\dim\sigma\le d-1$,

(ii) $\tau$ is an inclusion-maximal face of $\K$,

(iii) $\sigma\subseteq\tau$, and

(iv) $\tau$ is the \emph{only} face of $\K$
satisfying (ii) and (iii).

\noindent
Then we say that $\sigma$ is a \emph{$d$-collapsible face}
of $\K$ and that the simplicial complex
$\K':=\K\setminus\{\eta\in\K: \sigma \subseteq\eta \subseteq \tau\}$
arises from $\K$ by an elementary $d$-collapse.
A~simplicial complex $\K$ is \emph{$d$-collapsible}
 if there exists a sequence of elementary $d$-collapses
that reduces $\K$ to the empty complex $\emptyset$.
Fig.~\ref{FigCol} shows an example of $2$-collapsing.

\begin{figure}
\centering
\mbox{
\includegraphics[width=\textwidth]{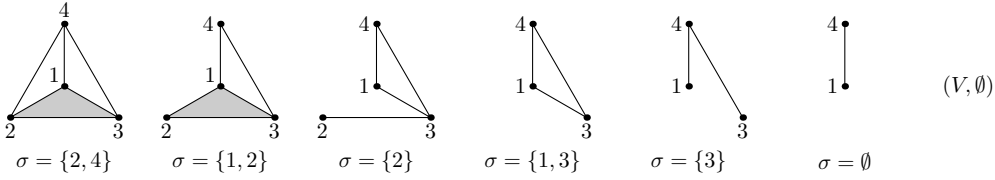}
}
\caption{An example of $2$-collapsing.}
\label{FigCol}
\end{figure}

Wegner \cite{wegner75} showed that every $d$-representable simplicial complex
is $d$-collapsible and also gave an example of 2-collapsible simplicial complex
that is not $2$-representable.

Alon et al.~\cite[Problem~17]{alon-kalai-matousek-meshulam02} asked whether
there is a function $f = f(d)$ such that every $d$-collapsible simplicial
complex is $f(d)$-representable.
Matou\v{s}ek and Tancer~\cite{matousek-tancer08} showed that $f(d) \geq 2d-1$
if
exists.
We improve this result by showing that 
no such $f$ exists.

\heading{Finite projective planes.}
A \emph{finite projective plane of order $q \geq 2$} is a pair $(P, \L)$ where $P$ is
a finite set of \emph{points}, and $\L \subseteq 2^{P}$ is a set of
subsets of $P$ (called \emph{lines}) such that (i) every two points are contained in a unique line,
(ii) every two lines intersect in a unique point, and (iii) every line contains $q
+ 1$ points. It follows that every point is contained in $q + 1$ lines and $|P| =
|\L| = q^2 + q + 1$, see e.g.~\cite{matousek-nesetril98} for more details.

It is well known that a projective plane of order $q$ exists whenever $q$ is a power of a prime. We remark that it is a well known open problem to decide whether there are projective planes of other orders.

It is also known that a finite projective plane cannot be represented in $\R^d$
so that the points of the projective plane are points in $\R^d$ and the lines
of the projective plane are the inclusionwise maximal collections of points
lying on a common Euclidean line. This fact follows for example from
Sylvester-Gallai theorem: if $p_1, \dots, p_n$ are points in the plane not all
of them lying on a common line, then there is a line in the plane which
intersects exactly two of these points (see~\cite{gallai44} for original
solution and \cite{kelly86} for an elegant proof).
Our task is to obtain a similar result where
the Euclidean lines are replaced by convex sets.

Let $(X, \F)$ be a set system where $X$ is a finite set and $\F$ is a set of
some subsets of $X$. We say that $(X, \F)$ is \emph{representable by convex
sets} in $\R^d$ if there are convex sets $C_F \subset \R^d$ for $F \in \F$ such
that for any $F_1, \dots, F_k \in \F$ the convex sets $C_{F_1}, \dots, C_{F_k}$
intersect if and only if the sets $F_1, \dots, F_k$ have a common point in
$X$.\footnote{We strongly distinguish the terms $d$-representable simplicial
complex and a set system representable by convex sets in $\R^d$; they have a
different meaning. In fact, they are dual in a certain sense. A set system $(X,
\F)$ is representable by convex sets in $\R^d$ if and only if the nerve of $\F$ is $d$-representable.}

\begin{theorem}
\label{ThmMain}
For every $d \in \N$ there is a $q_0 = q_0(d) \in \N$ such that any projective
plane $(P, \L)$ of order $q \geq
q_0$ is not representable by convex sets in $\R^d$.
\end{theorem}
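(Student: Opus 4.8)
The plan is to derive a contradiction from the existence of a representation by convex sets in $\R^d$ for projective planes of arbitrarily large order, using the two ingredients advertised in the abstract: a positive-fraction selection lemma of Pach and a result of Alon on expansion of projective planes. Suppose $(P,\L)$ is a projective plane of order $q$ represented by convex sets $C_\ell \subseteq \R^d$, one for each line $\ell \in \L$. Observe first that any point $p \in P$ lies on exactly $q+1$ lines, and the convex sets corresponding to those $q+1$ lines all contain a common point (a witness for $p$ in the representation), so each point of $P$ gives a ``positive-fraction'' intersecting subfamily — but only of size $q+1$, which is tiny compared with $n = q^2+q+1 = |\L|$. The key geometric tool is Pach's selection lemma: from a family of $n$ convex sets in $\R^d$ in which a positive fraction of the $(d+1)$-tuples intersect, one extracts $d+1$ subfamilies, each a constant fraction of the whole, with a point common to a transversal choosing one set from each subfamily; iterating/colourful Helly-type consequences then force a single common point of many of the $C_\ell$. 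So the first step is to show that in our representation a positive fraction of $(d+1)$-tuples of the $C_\ell$ must intersect.

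First I would count intersecting $(d+1)$-tuples: a $(d+1)$-tuple of lines $\ell_0,\dots,\ell_d$ has a common point in $\mathbb P$ iff all of them pass through one point $p$, i.e.\ iff they form a $(d+1)$-subset of the pencil of $q+1$ lines through some point. The number of such tuples is $(q^2+q+1)\binom{q+1}{d+1}$, which is of order $q^{d+3}$, whereas the total number of $(d+1)$-tuples of lines is of order $q^{2(d+1)} = q^{2d+2}$. For $d \geq 2$ this is a vanishing fraction, so a naive global count does not give a positive fraction — this is exactly where Alon's expansion result enters. Alon's theorem on the ``expansion'' of projective planes says (roughly) that the line set of a projective plane cannot be covered by a bounded number of ``small'' or ``structured'' pieces — more precisely, it controls how intersection patterns behave under partitions, guaranteeing that after the selection lemma produces $d+1$ large subfamilies $\mathcal F_0,\dots,\mathcal F_d$ of lines, one can still find within them many lines that are pairwise (hence, by the projective plane axioms, severally) non-concurrent, contradicting the existence of a common point guaranteed by the selection lemma. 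Concretely I would argue: if the representation existed, Pach's lemma would yield a point $x \in \R^d$ and subfamilies $\mathcal F_i \subseteq \L$ with $|\mathcal F_i| \geq c\,n$ such that for every transversal $(\ell_0,\dots,\ell_d) \in \mathcal F_0 \times \cdots \times \mathcal F_d$ the sets $C_{\ell_0},\dots,C_{\ell_d}$ have a common point; by the definition of a representation this means every such transversal of lines is concurrent in $\mathbb P$.

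The heart of the matter is then a purely combinatorial statement about projective planes: if $\mathcal F_0,\dots,\mathcal F_d$ are subsets of the lines of $\mathbb P$, each of size at least $cn$, then for $q$ large enough there is a transversal $(\ell_0,\dots,\ell_d)$, $\ell_i \in \mathcal F_i$, whose lines do \emph{not} all pass through a common point. This is where I expect the main obstacle: pushing a counting argument (a line $\ell$ through a fixed point $p$ meets each $\mathcal F_i$ in at most $q+1$ lines, so the pencils through all points account for far fewer than $|\mathcal F_0|\cdots|\mathcal F_d|$ transversals once $cn = \Theta(q^2) \gg q+1$) into a clean inequality, and making sure the constants from Pach's lemma and the expansion result are compatible — i.e.\ that the same $c$ works. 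Actually, even a crude bound suffices: the number of concurrent transversals is at most $\sum_{p\in P}\prod_{i=0}^d|\{\ell\in\mathcal F_i : p\in\ell\}| \leq |P|\,(q+1)^{d+1} = O(q^{d+3})$, while $\prod_i|\mathcal F_i| \geq (cn)^{d+1} = \Theta(q^{2d+2})$, and $2d+2 > d+3$ for $d\ge 2$ (the cases $d=0,1$ being trivial directly), so for $q \geq q_0(d)$ a non-concurrent transversal exists. This contradicts the conclusion extracted from Pach's lemma, completing the proof; the role of Alon's expansion result is precisely to guarantee that Pach's selection lemma is applicable, i.e.\ that a positive fraction of $(d+1)$-tuples of the $C_\ell$ intersect — which in turn follows because in a representation the intersecting $(d+1)$-tuples are exactly the concurrent ones, and expansion/regularity of the plane ensures these are not too sparse relative to what the selection lemma needs after localization. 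I would organize the write-up as: (1) recall Pach's lemma and Alon's expansion bound with explicit constants; (2) assume a representation exists; (3) use expansion to verify the hypothesis of Pach's lemma; (4) apply Pach's lemma to get large subfamilies with a common transversal point; (5) run the double-counting above to produce a non-concurrent transversal and reach a contradiction.
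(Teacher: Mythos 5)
There is a genuine gap, and it sits exactly where you flagged ``the main obstacle'': your plan is to apply a selection lemma \emph{to the convex sets} $C_\ell$, whose hypothesis is that a positive fraction of the $(d+1)$-tuples of the $C_\ell$ intersect, and to use Alon's expansion result to verify that hypothesis. But your own count in the second paragraph already shows the hypothesis is \emph{false}: in a representation the intersecting $(d+1)$-tuples of the $C_\ell$ are exactly the concurrent $(d+1)$-tuples of lines, of which there are only $n\binom{q+1}{d+1}=O(q^{d+3})$ out of $\Theta(q^{2d+2})$ in total, a fraction tending to $0$ for $d\ge 2$. No expansion property can change this count, so step (3) of your outline cannot be carried out, and without it steps (4)--(5) never get off the ground. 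The argument is in effect circular: the large subfamilies $\mathcal F_0,\dots,\mathcal F_d$ with all transversals concurrent, which you want the selection lemma to hand you, are precisely the objects your final double count shows cannot exist.

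The paper avoids this by applying Pach's positive-fraction selection lemma in its actual form, which is a statement about a finite \emph{point set} in general position and carries no density hypothesis at all. For each point $p$ of the plane one picks a witness $x_p\in\bigcap_{\ell\ni p}C_\ell$ (nonempty because the $q+1$ lines through $p$ are concurrent) and applies the lemma to $X=\{x_p : p\in P\}$, obtaining a point $a$ and disjoint blocks $Z_1,\dots,Z_{d+1}\subseteq X$ with $|Z_i|\ge c|X|$ such that the convex hull of every transversal contains $a$. Alon's expansion bound $|\{\ell\in\L:\ell\cap P_i=\emptyset\}|\le n^{3/2}/|P_i|$ is then invoked for the corresponding point classes $P_i\subseteq P$, not to certify any intersection density, but to show that at least half the lines of $\L$ meet every $P_i$; for such a line $\ell$ the set $C_\ell$ contains a full transversal of $(Z_1,\dots,Z_{d+1})$ and hence, by convexity, the point $a$. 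Thus $a$ lies in at least $|\L|/2$ of the $C_\ell$, contradicting the fact that at most $q+1$ of the $C_\ell$ can share a point. Your closing double count is a correct fact about projective planes, but in the actual proof the contradiction is obtained from the much cruder bound ``at most $q+1$ lines are concurrent''; the double count cannot substitute for the missing, and unobtainable, verification of a positive-fraction intersection hypothesis for the sets $C_\ell$ themselves.
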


We remark that the assumption $q \geq q_0$ cannot be left out since every
projective plane $(P, \L)$ of order $q$ can be easily represented by convex
sets in $\R^{q^2 + q}$. For consider $P$ as the set of vertices of a $(q^2 +
q)$-simplex in $\R^{q^2 + q}$ and set $C_{\ell} := \conv\{p: p \in \ell\}$ for a
line $\ell \in \L$.\footnote{With a bit more effort can be shown that a
projective plane of order $q$ can be represented by convex sets in $\R^{2q+1}$.
This is based on the fact that a simplicial complex of dimension $d$ is $(2d +
1)$-representable.}

Our main tools are the positive-fraction selection lemma and the fact that
the projective planes (considered as bipartite graphs) are
expanders. 

Theorem~\ref{ThmMain} is proved in Section~\ref{SecPrf}.

We also have the following consequence of Theorem~\ref{ThmMain} which answers
the question of Alon~at al.~\cite{alon-kalai-matousek-meshulam02} (as announced
above) and which is proved in Section~\ref{SecGap}. 


\begin{corollary}
\label{ThmGap}
Let $d > 1$ be an integer and let $q_0 = q_0(d)$ be the integer from
Theorem~\ref{ThmMain}. Let $(P, \L)$ be the projective plane of order
$q \geq q_0$. Let $\K_q$ be a simplicial complex whose
vertices are points in $P$ and whose faces are subsets of lines in $\L$. Then
$\K_q$ is 2-collapsible and is not $d$-representable.

\end{corollary}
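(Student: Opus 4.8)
The plan is to treat the two assertions of the corollary separately, and I would handle non-$d$-representability first. For a point $p \in P$ put $A_p := \{\ell \in \L : p \in \ell\} \subseteq \L$, the pencil of lines through $p$. A set $\{p_1,\dots,p_k\} \subseteq P$ satisfies $\bigcap_{i=1}^k A_{p_i} \neq \emptyset$ precisely when some line of $\L$ contains all of $p_1,\dots,p_k$, i.e. precisely when $\{p_1,\dots,p_k\}$ is a face of $\K_q$. Hence $\K_q$ is isomorphic to the nerve of the family $\F := \{A_p : p \in P\}$ of subsets of $\L$. The pair $(\L, \F)$ is exactly the dual plane of $(P,\L)$, which is again a projective plane of order $q \ge q_0$. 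By the footnote to the definition of representability, the nerve of $\F$ is $d$-representable if and only if $(\L, \F)$ is representable by convex sets in $\R^d$, and the latter is forbidden by Theorem~\ref{ThmMain}. Therefore $\K_q$ is not $d$-representable.

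For $2$-collapsibility the only structural input is that any two lines of $(P,\L)$ meet in exactly one point, so \emph{every face of $\K_q$ with at least two vertices lies in a unique facet} (the unique line through any two of its points); in particular every edge is a $2$-collapsible face. I would collapse the lines one at a time in an arbitrary fixed order $\ell_1,\dots,\ell_n$. When $\ell_i$ is processed, its subsets of size $\ge 2$ are all still present (such a set is a subset of no line other than $\ell_i$, hence is removed only while $\ell_i$ is being processed), and any current facet containing a $2$-element $\sigma \subseteq \ell_i$ is itself a subset of $\ell_i$; so deleting all $\ge 2$-element subsets of $\ell_i$ reduces to the elementary fact that one can strip a simplex of all its faces of size $\ge 2$ by a sequence of elementary $2$-collapses, each using a suitable $2$-element $\sigma \subseteq \ell_i$, while leaving all vertices in place. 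After all lines have been processed, every face of size $\ge 2$ has been removed, every vertex of $P$ survives, so the complex consists of the isolated vertices of $P$ together with $\emptyset$; these are cleared by elementary $2$-collapses with $\sigma = \tau$ a single vertex (and finally $\sigma = \tau = \emptyset$). This shows $\K_q$ is $2$-collapsible.

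The geometric content here is essentially trivial once the passage to the dual plane and the ``two points determine a unique line'' fact are in hand; the main obstacle is purely a bookkeeping one in the collapsing argument, namely choosing, within the processing of a single line $\ell_i$, the order of the elementary collapses so that the uniqueness condition (iv) of an elementary $2$-collapse holds at every intermediate stage — equivalently, giving an explicit inductive recipe for how a simplex is $2$-collapsed down to exactly its vertices. This is routine but does need to be written out carefully, since condition (iv) is a global condition on the current complex even though, as noted above, the relevant facets all lie inside $\ell_i$.
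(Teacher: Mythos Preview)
Your proposal is correct and follows essentially the same route as the paper: for non-$d$-representability you pass to the dual projective plane and invoke Theorem~\ref{ThmMain} via the nerve/representability correspondence, exactly as the paper does; for $2$-collapsibility you collapse the lines one by one down to their vertices using that distinct facets meet only in a vertex, which is again the paper's argument. The one piece you flag as ``routine but needing to be written out''---the explicit order of elementary $2$-collapses that strips a simplex down to its vertex set---is supplied in the paper as a separate lemma, using the lexicographic ordering of the edges.
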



\section{Proof of the main result}
\label{SecPrf}

\begin{figure}
\begin{center}
\includegraphics{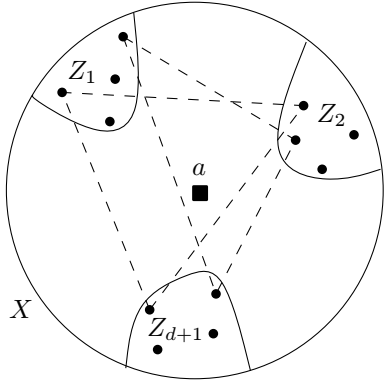}
\end{center}
\caption{Positive-fraction selection lemma: every triangle formed by the sets
$Z_i$ contains $a$.}
\label{FigPFSL}
\end{figure}

In this section we prove Theorem~\ref{ThmMain}. We need few preliminaries. Let
$(Z_1, \dots, Z_k)$ be a $k$-tuple of sets. By a \emph{transversal} of this $k$-tuple
we mean any set $T = \{t_1, \dots, t_k\}$ such that $t_i \in Z_i$ for every $i
\in [k]$. We need the following result due to Pach~\cite{pach98}; see
also~\cite[Theorem 9.5.1]{matousek02}. See Figure~\ref{FigPFSL}.

\begin{theorem}[Positive-fraction selection lemma; a special case]
\label{ThmPFSL}
For every natural number $d$, there exists $c = c(d) > 0$ with the following
property. Let $X \subset \R^d$ be a finite set of points in general position
(i.e., there are no $d + 1$ points lying in a common hyperplane). Then there
is a point $a \in \R^d$ and disjoint subsets $Z_1, \dots, Z_{d+1}$, with $|Z_i|
\geq c|X|$ such that the convex hull of every transversal of $(Z_1, \dots,
Z_{d+1})$ contains $a$.
\end{theorem}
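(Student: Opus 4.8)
\medskip

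The plan is to deduce Theorem~\ref{ThmPFSL} from two classical facts about point sets in general position. The first is the \emph{first selection lemma} of Boros--F\"uredi and B\'ar\'any: some point of $\R^d$ lies in a positive fraction of all $\binom{|X|}{d+1}$ simplices spanned by $X$. The second is the \emph{same-type lemma} of B\'ar\'any--Valtr, in its positive-fraction form: if, among the transversals of a tuple of finite sets $(Y_1,\dots,Y_m)$ whose union is in general position, a positive fraction all realise one prescribed order type $\omega$, then there are subsets $Z_i\subseteq Y_i$ with $|Z_i|\ge c|Y_i|$ (for some $c=c(\omega,m,d)>0$) such that \emph{every} transversal of $(Z_1,\dots,Z_m)$ realises $\omega$. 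The observation linking the two is that whether a point $a$ lies in $\conv\{p_1,\dots,p_{d+1}\}$ depends only on the order type of the $(d{+}2)$-point configuration $(a,p_1,\dots,p_{d+1})$; so once the first selection lemma supplies a point $a$ together with a positive-density family of transversals of the ``right'' type, the same-type lemma promotes that family to a full $(d{+}1)$-partite family with parts of linear size.

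Concretely I would proceed as follows. First, apply the first selection lemma to obtain $a\in\R^d$ lying in at least $c_1\binom{n}{d+1}$ simplices of $X$, where $n:=|X|$ and $c_1=c_1(d)>0$; a generic perturbation of $a$ puts $X\cup\{a\}$ in general position while keeping $a$ in the interior of at least $c_1'\binom{n}{d+1}$ of these simplices. Second, colour $X$ with $d+1$ colours, the classes as equal as possible, uniformly at random: a fixed $(d{+}1)$-point simplex is ``rainbow'' with probability at least some $\rho(d)>0$, so by linearity of expectation some colouring, with classes $X_1,\dots,X_{d+1}$ of size at least $n/(d{+}1)-1$, has at least $c_2 n^{d+1}$ rainbow simplices with $a$ in their interior ($c_2=c_2(d)>0$). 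Third, since a $(d{+}2)$-point configuration in general position has at most $N=N(d)$ order types, pigeonholing over the order types of $(a,p_1,\dots,p_{d+1})$ produces one type $\omega_0$ --- necessarily one in which the distinguished point lies in the interior of the hull of the other $d+1$ points --- realised by at least $(c_2/N)\,n^{d+1}$ of the rainbow simplices. Finally, apply the positive-fraction same-type lemma to the $(d{+}2)$-tuple $(\{a\},X_1,\dots,X_{d+1})$: a positive fraction of its transversals realises the fixed type $\omega_0$, so we obtain $Z_i\subseteq X_i$ with $|Z_i|\ge c'|X_i|\ge c(d)\,n$ such that every transversal of $(\{a\},Z_1,\dots,Z_{d+1})$ realises $\omega_0$. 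By the choice of $\omega_0$ this says precisely that $a\in\conv\{t_1,\dots,t_{d+1}\}$ for every transversal $(t_1,\dots,t_{d+1})$ of $(Z_1,\dots,Z_{d+1})$, and the $Z_i$ are disjoint since they live in distinct colour classes; this is the assertion of the theorem.

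The step I expect to be the main obstacle is the positive-fraction same-type lemma itself, and inside it the atomic operation of making \emph{one} orientation predicate $\mathrm{orient}(p_{i_0},\dots,p_{i_d})$ constant over transversals while (a) forcing it to the value prescribed by $\omega_0$ and (b) still retaining subsets of \emph{linear} size --- not the $\Theta(\sqrt n)$ size that a crude tuple count yields, nor the $\Theta((\log n)^{1/d})$ size that a Ramsey-type argument yields. The right tool is a cutting/threshold argument: fix a point of one colour class and a hyperplane (obtained via the ham-sandwich theorem, or by averaging over a family of hyperplanes through points of the remaining classes) so that a definite fraction of each involved class lies on the side dictated by $\omega_0$; one then iterates this over the $O(d)$ orientation predicates whose values together pin down the order type, composing the constant losses. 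The delicate point, which is the technical heart, is to check that each such one-predicate step still leaves a positive fraction of the $\omega_0$-typed transversals intact, so that the subsequent step has something to act on; everything else is bookkeeping of the chain of constants $c_1\to c_1'\to c_2\to c_2/N\to c(d)$.
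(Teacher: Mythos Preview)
The paper does not actually prove Theorem~\ref{ThmPFSL}: it is quoted from Pach~\cite{pach98} (with a pointer to \cite[Theorem~9.5.1]{matousek02}), and the paper explicitly says it will ``not reproduce any details of the proof here,'' remarking only that the argument ``uses several involved tools such as weak hypergraph regularity lemma or same-type lemma.'' So there is nothing to compare your proposal against line by line; the relevant question is whether your sketch is a correct instantiation of one of those two routes. It is. You take the same-type-lemma route rather than Pach's original regularity-based proof, and the chain \emph{first selection lemma $\to$ random $(d{+}1)$-colouring $\to$ pigeonhole on order types $\to$ same-type lemma applied to $(\{a\},X_1,\dots,X_{d+1})$} is a standard and correct way to organise that argument.

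One comment on the step you single out as the obstacle. You phrase the same-type lemma in a ``prescribed-type, positive-fraction'' form: if an $\alpha$-fraction of transversals realise a given type $\omega_0$, then one can pass to linear-size subsets all of whose transversals realise $\omega_0$. This is true, and your iterated ham--sandwich sketch is the right idea, but it is worth noting why the linear bound survives: the ham--sandwich step bisects the relevant colour classes \emph{exactly}, so iterating over all $\binom{d+2}{d+1}$ orientation predicates partitions each class into at most a bounded number $K=K(d)$ of \emph{equal-size} pieces, and every resulting product cell is same-type. Since one of the (boundedly many) cells of type $\omega_0$ must exist as soon as $\alpha>0$, each of its factors automatically has size $|X_i|/K$. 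In other words, the ``linear size'' comes for free from the equipartition, not from the density $\alpha$; this is what makes the step work and sidesteps the $\sqrt{n}$ or $\log n$ losses you were worried about.
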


We remark that the proof of Theorem~\ref{ThmPFSL} uses several involved tools
such as weak hypergraph regularity lemma or same-type lemma
(therefore we do not reproduce any details of the proof here). We should also
remark that this is only a special case of Pach's theorem (but general enough);
Pach moreover assumes that $Z_i \subseteq X_i$, where $X_1 \cup \cdots \cup
X_{d+1}$ is a partition of $X$, and in this seting $|Z_i| \geq c|X_i|$.

We also need the following expansion property of the projective
plane~\cite[Theorem 2.1]{alon85}, \cite{alon86}.

\begin{theorem}
\label{ThmExp}
Let $(P, \L)$ be a projective plane of order $q$. Let $A \subseteq P$.
Then $|\{\ell \in \L: \ell \cap A = \emptyset\}| \leq n^{3/2}/|A|$, where $n =
q^2 + q + 1$.
\end{theorem}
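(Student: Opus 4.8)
The plan is to run a second-moment argument on the statistic $x_\ell := |\ell \cap A|$ as $\ell$ ranges over $\L$. Write $a := |A|$; we may assume $a \geq 1$, the case $A = \emptyset$ being trivial. We use exactly two axioms of the projective plane: every point lies on precisely $q+1$ lines, and every two distinct points lie on a unique common line. From the first axiom, the first moment is
\[
\sum_{\ell \in \L} x_\ell \;=\; \sum_{p \in A} |\{\ell \in \L : p \in \ell\}| \;=\; a(q+1).
\]
For the second moment, expanding $x_\ell^2 = \sum_{p, p' \in A} [p \in \ell]\,[p' \in \ell]$ and summing over $\ell$, the diagonal terms $p = p'$ again contribute $a(q+1)$ in total, while each of the $a(a-1)$ off-diagonal ordered pairs contributes exactly $1$ by the uniqueness-of-line axiom; hence
\[
\sum_{\ell \in \L} x_\ell^2 \;=\; a(q+1) + a(a-1) \;=\; a(q+a).
\]

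Next I would bound $L_0 := |\{\ell \in \L : \ell \cap A = \emptyset\}|$, i.e.\ the number of lines with $x_\ell = 0$. Let $\bar x := \frac{1}{n} \sum_{\ell} x_\ell = \frac{a(q+1)}{n}$. Since each line with $x_\ell = 0$ contributes $\bar x^2$ to the sum $\sum_\ell (x_\ell - \bar x)^2 = \sum_\ell x_\ell^2 - n \bar x^2$, we obtain
\[
L_0\, \bar x^2 \;\leq\; \sum_{\ell \in \L} (x_\ell - \bar x)^2 \;=\; a(q+a) - \frac{a^2(q+1)^2}{n},
\]
and therefore, after dividing by $\bar x^2 = a^2(q+1)^2/n^2$,
\[
L_0 \;\leq\; \frac{(q+a)\,n^2}{a\,(q+1)^2} - n.
\]

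Finally I would simplify using the elementary inequalities $q^2 < n < (q+1)^2$ (recall $n = q^2+q+1$). Since $n/(q+1)^2 < 1$, we get $\frac{(q+a)n^2}{a(q+1)^2} < \frac{(q+a)n}{a}$, so
\[
L_0 \;<\; \frac{(q+a)\,n}{a} - n \;=\; \frac{qn}{a},
\]
and since $q < \sqrt n$ this gives $L_0 < n^{3/2}/a = n^{3/2}/|A|$, which is even slightly stronger than the stated bound. The whole argument is short; the only place needing attention is this last chain of estimates --- one must check each inequality points the right way and that no factor is lost. (If one instead does the middle step via Cauchy--Schwarz applied to the $n - L_0$ lines that meet $A$, one should verify it still yields the clean $n^{3/2}/|A|$ bound and not a weaker one, which is why I would prefer the variance formulation above.)
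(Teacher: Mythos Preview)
Your argument is correct. The moment computations are right, the variance bound is applied correctly, and the final chain of inequalities $n/(q+1)^2<1$ and $q<\sqrt n$ indeed yields $L_0<qn/|A|<n^{3/2}/|A|$.

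Your route differs from the paper's. The paper proves the bound spectrally: it forms the point--line incidence matrix $M$, observes that $MM^{T}$ has eigenvalues $(q+1)^2,q,\ldots,q$, and then invokes Tanner's expansion inequality to lower-bound $|N(A)|$, which after simplification gives $n-|N(A)|\le n^{3/2}/|A|$. Your second-moment calculation is the elementary, self-contained counterpart of this: the identities $\sum_\ell x_\ell=a(q+1)$ and $\sum_\ell x_\ell^2=a(q+a)$ encode exactly the same combinatorial information that drives the eigenvalue computation (indeed, the ``variance'' $\sum_\ell(x_\ell-\bar x)^2$ is what the spectral gap $\lambda_2=q$ controls in Tanner's bound). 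The payoff of your approach is that it avoids citing Tanner's theorem and computing eigenvalues, making the proof entirely combinatorial; the payoff of the paper's approach is that it situates the result within the general expander framework, where the projective plane is just one instance of a $(q+1)$-regular bipartite graph with second eigenvalue $\sqrt q$.
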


Alon, Haussler and Welzl~\cite{alon-haussler-welzl87} used this expansion
property in the context of range-search problems. They showed that the points of a
projective plane (of high enough order) cannot be partitioned into a small
number of sets $P_1$, \dots, $P_m$ so that for every projective line $\ell$ 
the set $\bigcup_{\ell \cap P_i \neq \emptyset} P_i$ contains only a
given fraction of all the points. Known results on range search problems imply
that a projective plane of a high order cannot be represented by half-planes or
simplices in $\R^d$. However, the author is not aware that this approach would
imply the result for convex sets. 

For completeness, we also reproduce a short proof of Theorem~\ref{ThmExp}.

\begin{proof}[Proof of Theorem~\ref{ThmExp}.]
Let $M = (m_{p\ell})$ be an $n \times n$ matrix
with rows indexed by the points of $P$ and columns indexed by the lines of
$\L$. We set $m_{p\ell} := 1$ if $p \in \ell$ and $m_{p\ell} := 0$ otherwise. The
matrix $MM^{T}$ has real nonnegative eigenvalues $\lambda_1 \geq \lambda_2 \geq
\cdots \geq \lambda_n$.

By a theorem of Tanner~\cite{tanner84}
$$
|N(A)| \geq \frac{(q+1)^2 |A|}{((q+1)^2 - \lambda_2) |A| / n + \lambda_2}
$$
where $N(A)$ denotes $\{ \ell \in \L: \ell \cap A \neq \emptyset\}$, the
neighborhood of $A$.

It is not hard to compute that $\lambda_1 = (q + 1)^2$ and $\lambda_2 = \cdots
= \lambda_n = q$. Consequently,
$$
|N(A)| \geq \frac{(q+1)^2 |A|}{|A| + q} = n - \frac{q(n - |A|)}{|A| + q} \geq n -
\frac{n^{3/2}}{|A|}.
$$

\end{proof}

\begin{proof}[Proof of Theorem~\ref{ThmMain}]
For contradiction, we assume that $(P, \L)$ is representable by convex sets in
$\R^d$; i.e., there are convex sets $C_{\ell}$ for $\ell \in \L$ such that
$C_{\ell_1}, \dots, C_{\ell_k}$ intersect if and only if $\ell_1, \dots,
\ell_k$ contain a common point. By
standard tricks, we can assume that these sets are open. We explain this step 
at the end of the proof.

Let $p \in P$. We know that $\bigcap\limits_{p \in
\ell} C_{\ell}$ is nonempty (and open). Let $x_p$ be a point of this
intersection. We define $X := \{x_p : p \in P\}$. Because of the openness of the
intersections we can assume that $X$ is in general position.
\begin{figure}
\begin{center}
\includegraphics{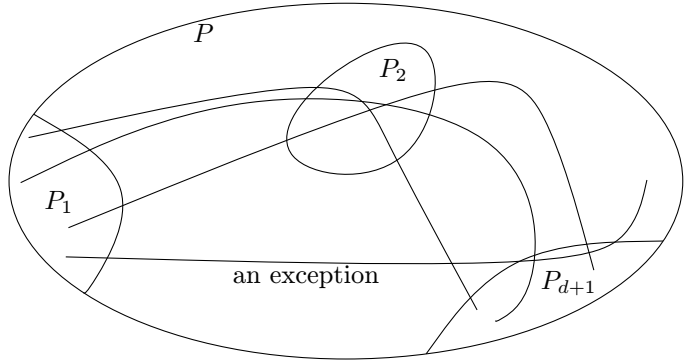}
\end{center}
\caption{Almost all lines intersect all of $P_1, \dots, P_{d+1}$.}
\label{FigLines}
\end{figure}

Let $c = c(d) > 0$, $a \in \R^d$, and $Z_1, \dots, Z_{d+1}$ be the (output)
data from Theorem~\ref{ThmPFSL} (when applied to $X$). We
know that $|Z_i| \geq c|X|$. Let us set $P_i := \{p \in P: x_p \in Z_i\}$ and
$\M_i := \{ \ell \in \L: \ell \cap P_i = \emptyset\}$. 
By Theorem~\ref{ThmExp} with $A = P_i$ we get
$$
|\M_i| \leq \frac{n^{3/2}}{|P_i|}
= \frac{n^{3/2}}{|Z_i|} \leq \frac{n^{3/2}}{c|X|} =
\frac{n^{3/2}}{cn} \leq 
\frac{n}{2(d+1)} = 
\frac{|\mathcal L|}{2(d+1)}
$$
provided that $q$ (and hence $n$ as well) is sufficiently large (depending on $d$ and $c$).

Hence the set $\L' := \L \setminus(\M_1 \cup \dots \cup \M_{d+1})$ of lines
that intersect each of $P_1, \dots, P_{d+1}$ contains at least half of the
lines of $\L$.
Now let $\ell \in \L'$ and let $p_i \in \ell \cap P_i$. Then $(x_{p_1}, \dots,
x_{p_{d+1}})$ is a transversal of $(Z_1, \dots, Z_{d+1})$. Thus $a \in \conv
\{x_{p_1}, \dots, x_{p_{d+1}}\} \subseteq C_\ell$, and so $a$ is contained in at
least $\frac{|\L|}2$ of the $C_\ell$.
This is a contradiction since at most $q + 1$ sets among the $C_{\ell}$ can
have a nonempty intersection.

It remains to show that we can assume the $C_{\ell}$ open.
Let us have a set of lines $\mathcal S = \{\ell_1, \dots, \ell_k\}$ such that
$C(\mathcal S) := \bigcap_{i=1}^k C_{\ell_i}$ is nonempty. 
In such a case we pick a point $y_{\mathcal S} \in C(\mathcal S)$.
For $\ell \in \L$ we set 
$$
C'_{\ell} := \conv\left\{y_{\mathcal S}: \mathcal S \subseteq \L, \ell \in
\mathcal S, C(\mathcal S)
\neq \emptyset \right\}.
$$
In the definition of $C'_{\ell}$ we have considered all the possible
intersection, thus the sets $\{C'_{\ell}: \ell \in \L \}$ have the same
``intersection pattern'' as the sets $\{C_{\ell}: \ell \in \L\}$. It means that
the sets $\{C'_{\ell}: \ell \in \L \}$ are a counterexample for the theorem
if and only if the sets  $\{C_{\ell}: \ell \in \L\}$ are a counterexample. The
sets $C'_{\ell}$ are compact sets it means that since now we can assume that all
the considered sets are compact.

Finally we blow up this sets by a small $\varepsilon > 0$. Thus we get open sets
instead of compact sets.

\end{proof}

\section{Proof of the gap between $d$-representability and $d$-collapsibility}
\label{SecGap}

\begin{proof}[Proof of Theorem~\ref{ThmGap}]
The fact that the complex $\K_q$ is 2-collapsible is essentially mentioned
in~\cite[discussion below Problem~15]{alon-kalai-matousek-meshulam02} (without a proof).

All the inclusionwise maximal faces of $\K_q$ are of the form $\sigma_{\ell} = \{
p: p \in \ell\}$ for $\ell \in \L$. Two such faces intersect only in a vertex,
thus it is possible to 2-collapse these faces gradually to the vertices;
the details are given in Lemma~\ref{LemColl} below. After these collapsings, it is sufficient to remove the
vertices (which are already inclusionwise maximal).

It remains to show that $\K_q$ is not $d$-representable. We consider the dual
projective plane $(\L, \bar P)$, where $\bar P := \{ \{\ell \in \L: p \in \ell
\}: p \in P\}$. In particular we can identify a point $p \in P$ with a
dual line $\{ \ell \in \L: p \in \ell\} \in \bar P$. 
Theorem~\ref{ThmMain} applied for this dual plane $(\L, \bar P)$
essentially states that $\K_q$ is not $d$-representable (convex sets in
the statement now correspond to the lines in $\bar P$, which we have identified
with $P$---the set of vertices of $K_q$).

\end{proof}

\begin{lemma}
\label{LemColl}
Let $\Delta$ be a $d$-simplex, i.e., a simplicial complex with $[d+1]$ as the
set of vertices and with all the possible faces. Then there is a sequence of
elementary 2-collapses that starts with $\Delta$ and ends with the simplicial
complex that contains all the vertices of $\Delta$ and no faces of higher
dimension.
\end{lemma}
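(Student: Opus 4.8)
The statement is about collapsing a full $d$-simplex $\Delta$ down to its vertex set using elementary $2$-collapses. The natural strategy is induction on $d$, or equivalently induction on the dimension of the faces we still need to eliminate. First I would observe that an elementary $2$-collapse needs a face $\sigma$ with $\dim\sigma\le 1$ (so $\sigma$ is a vertex or an edge) that is contained in a \emph{unique} inclusionwise-maximal face $\tau$; the collapse then deletes the whole interval $[\sigma,\tau]$. So the key is to repeatedly find an edge contained in a unique maximal face.

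\medskip

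I would argue top-down on the dimension of the current complex. Suppose at some stage the complex $\K$ is the full $k$-skeleton of $\Delta$ for some $k\ge 2$ (initially $k=d$). The inclusionwise-maximal faces are exactly the $k$-dimensional faces (the $k$-subsets of $[d+1]$). Pick any edge $e=\{1,2\}$; the $k$-faces containing $e$ are the sets $e\cup S$ with $S$ a $(k-1)$-subset of $\{3,\dots,d+1\}$, and there are $\binom{d-1}{k-1}\ge 2$ of them as long as $k\le d-1$, so $e$ is \emph{not} yet $2$-collapsible. The fix is to first thin out the $k$-faces: process the $(k-1)$-faces one at a time. Concretely, fix a $(k-1)$-face $\rho$; as long as $\rho$ lies in at least two $k$-faces we cannot collapse using $\rho$ directly, but we can collapse using a well-chosen edge inside $\rho$ once $\rho$ is the only maximal face through that edge — which is not yet the case either. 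The cleaner route is the following order: collapse the $k$-faces using their $(k-1)$-subfaces. A $(k-1)$-face $\rho$ is contained in exactly $d+1-k$ many $k$-faces in the full $k$-skeleton; that is more than one when $k\le d-1$. So even this does not immediately work, and one must be more careful about the order.

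\medskip

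The correct mechanism, which I would make precise, is: collapse $k$-faces one by one, each time choosing as $\sigma$ a $(k-1)$-face $\rho$ that at this moment is contained in a \emph{unique} remaining $k$-face $\tau$. This is possible because once we have already removed all but one of the $k$-faces through $\rho$, the face $\rho$ becomes $2$-collapsible (here $\dim\rho=k-1\ge 1$ since $k\ge 2$, so condition (i) holds). So I would fix an enumeration $\tau_1,\tau_2,\dots$ of the $k$-faces such that each $\tau_j$ has a $(k-1)$-subface $\rho_j$ not contained in any later $\tau_{j'}$ with $j'>j$; for instance order the $k$-subsets of $[d+1]$ and for $\tau_j$ take the $(k-1)$-face obtained by deleting its largest-labelled vertex — a standard "shelling-type" argument shows each such $\rho_j$ is fresh. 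Collapsing $[\rho_j,\tau_j]$ in turn removes all the $k$-faces (and the offending $(k-1)$-faces $\rho_j$), leaving a subcomplex of the $(k-1)$-skeleton; one checks it is in fact the full $(k-1)$-skeleton or close enough, and in any case its maximal faces now have dimension $\le k-1$. Iterating this from $k=d$ down to $k=2$ reduces $\Delta$ to its $1$-skeleton. Finally, for $k=1$: in the $1$-skeleton of $\Delta$ every edge $\{i,j\}$ is maximal and is the only maximal face containing the vertex $\{i\}$... no — a vertex lies in $d$ edges. So for the last step I would instead collapse each edge using \emph{itself} as $\sigma$ (an edge is its own unique maximal superface), which by conditions (i)–(iv) with $\sigma=\tau=\{i,j\}$ (and $\dim\sigma=1\le d-1$) is a legal elementary $2$-collapse removing exactly that edge. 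Doing this for all edges leaves precisely the $0$-skeleton, as required.

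\medskip

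\textbf{Main obstacle.} The delicate point is bookkeeping the order of collapses so that at each step the chosen $(k-1)$-face is contained in \emph{exactly one} surviving maximal face — condition (iv). This is a shelling/lexicographic-order argument and is the part that needs care; everything else (checking $\dim\sigma\le 1$, tracking which faces remain) is routine. I would handle it by the explicit "delete the largest vertex" rule above, which guarantees freshness, and then induct.
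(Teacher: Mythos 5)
There is a genuine error at the heart of your ``correct mechanism''. An elementary $2$-collapse requires $\dim\sigma\le 1$ (this is condition (i) with $d=2$, an \emph{upper} bound), so $\sigma$ must be a vertex or an edge --- and indeed you state this correctly in your first paragraph. But your main reduction step chooses as $\sigma$ a $(k-1)$-face $\rho$ of a $k$-face $\tau$, and justifies condition (i) by ``$\dim\rho=k-1\ge 1$'', which reads the condition as a lower bound. For $k\ge 3$ one has $\dim\rho\ge 2$, so these moves are not elementary $2$-collapses; your scheme shows at best that $\Delta$ is $d$-collapsible (which is immediate), not that it can be reduced to its vertices by $2$-collapses. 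The entire content of the lemma is that one can get by with $\sigma$ of dimension at most $1$. A secondary problem is your running assumption that the intermediate complexes are full skeleta: after any $2$-collapse with $\sigma$ an edge, the complex is a union of facets of $\Delta$ (full simplices on various vertex subsets), never a skeleton, so the ``pass from the $k$-skeleton to the $(k-1)$-skeleton'' framing does not describe what actually happens, and the shelling-type bookkeeping you propose is aimed at the wrong target.

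The paper's proof avoids all of this by using only edges as $\sigma$, taken in lexicographic order: $\{1,2\},\{1,3\},\dots,\{1,d+1\},\{2,3\},\dots,\{d,d+1\}$. The invariant is that after collapsing $\{1,2\},\dots,\{1,j\}$ the surviving faces containing the vertex $1$ are exactly those disjoint from $\{2,\dots,j\}$; hence $\{1,j+1\}$ is contained in the unique inclusion-maximal face $[d+1]\setminus\{2,\dots,j\}$, so conditions (i)--(iv) hold and the collapse is legal. After the whole first block of edges, vertex $1$ is isolated and the rest of the complex is the full simplex on $\{2,\dots,d+1\}$, so one inducts. If you want to salvage your write-up, replace the $(k-1)$-face mechanism by this edge-only lexicographic order; note also that your final step (removing a maximal edge with $\sigma=\tau$) is then unnecessary, since every edge is eventually used as the $\sigma$ of some collapse and is removed at that moment.
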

\begin{proof}
In every elementary 2-collapse we only mention the smaller face $\sigma$ (here we adopt
the notation from the definition of an elementary $d$-collapse), since a
2-collapse is uniquely determined by $\sigma$.

The following sequence of choices of $\sigma$ provides the required
$2$-collapsing (the faces are ordered in the lexicographical order, see
Figure~\ref{FigCollapsing}).
$$
\{1,2\}, \{1,3\}, \dots, \{1, d+1\}, \{2,3\}, \dots, \{2, d+1\}, \{3, 1\},
\dots, \{d, d +1\}.
$$
\end{proof}

\begin{figure}
\begin{center}
\includegraphics{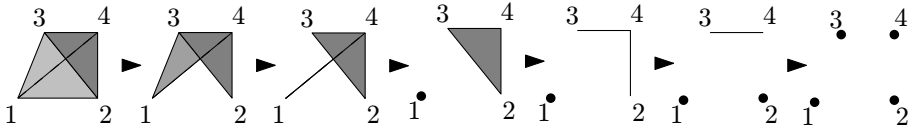}
\end{center}
\caption{Collapsing a simplex to its vertices.}
\label{FigCollapsing}
\end{figure}

\section*{Acknowledgement}
I would like to thank Ji\v{r}\'{\i} Matou\v{s}ek for informing me about
the result on the expansion property of projective planes and also for suggesting many
improving comments while reading the preliminary version of the article.

\bibliographystyle{alpha}
\bibliography{/home/martin/clanky/bib/grph,/home/martin/clanky/bib/topocom,/home/martin/clanky/bib/combgeo,/home/martin/clanky/bib/compl}

\end{document}